\newtheorem{prop}{Proposition}
\newtheorem{defn}{Definition}
\newtheorem{lemma}{Lemma}
\newtheorem{theorem}[prop]{Theorem}
\theoremstyle{definition}
\newcommand{\ds}[1]{\ensuremath{\displaystyle{#1}}}
\newcommand{\ess}[1]{\hat{#1}}
\newcommand{\escortdist}{\ensuremath{\hat{\phi}}}
\newcommand{\philog}{\ensuremath{\log_{\phi}}}
\newcommand{\phiexp}{\ensuremath{\exp_{\phi}}}
\newcommand{\phimean}[2]{\mathbb{E}^{\phi}_{#1}\left[#2\right]}
\newcommand{\psimean}[2]{\mathbb{E}^{\psi}_{#1}\left[#2\right]}
\newcommand{\phivar}[2]{\text{Var}^{\phi}_{#1}\left[#2\right]}
\newcommand{\q}{\ensuremath{q}}
\newcommand{\qex}[2]{\mathbb{E}^{\q}_{#1}\left[#2\right]}
\begin{document}
\title{Escort Evolutionary Game Theory}
\author{Marc Harper}
\address{University of California Los Angeles}
\email{marcharper@ucla.edu} 
% \authorinfo{\texttt{c}}
\date{\today}
\subjclass[2000]{Primary: 37N25; Secondary: 91A22, 94A15}
\keywords{evolutionary game theory, information geometry, information divergence}

\begin{abstract}
A family of replicator-like dynamics, called the escort replicator dynamic, is constructed using information-geometric concepts and generalized information diverenges from statistical thermodynamics. Lyapunov functions and escort generalizations of basic concepts and constructions in evolutionary game theory are given, such as an escorted Fisher's Fundamental theorem and generalizations of the Shahshahani geometry.
\end{abstract}

\maketitle

\section{Introduction}

Recent interest in generalized entropies and approaches to information theory include evolutionary algorithms using new selection mechanisms, such as generalized exponential distributions in place of Boltzmann distributions\cite{Dukkipati04, Dukkipati05} and the use of generalized logarithms and information measures in dynamical systems\cite{Martinez08}, statistical physics\cite{Tsallis88}, information theory \cite{Borland98}, \cite{Naudts02, Naudts04, Naudts08}, and complex network theory\cite{Turner07}. This paper brings these generalizations to evolutionary dynamics, yielding a class of dynamics containing the replicator dynamic and the orthogonal projection dynamic.

Information-theoretic approaches have produced models of adpated behavior similar to, and more general than, the replicator equation in certain ways.\cite{Sato05} Information geometry yields fundamental connections between evolutionary stability and information theory, generating the replicator equation from information divergences as gradient dynamics\cite{Amari95}. Statistical thermodynamics describes generalized information divergences\cite{Naudts08} which this paper uses to generate analogs of the replicator dynamic. Lyapunov functions are constructed from generalized information divergences.

The motivation for these constructions comes from the fact that while Fisher information and the Kullback-Liebler divergences have nice uniqueness properties in some settings, they are not the only way to measure information. Many other divergences have been studied in other contexts and a natural question is to ask what differences arise in models of natural selection that measure information differently.

The methods in this work yield several interesting specific examples that handle non-linear fitness landscapes, compute integrals of motion, and tightly summarize known results for the replicator and projection dynamics, including a single formula for the Lyapunov functions that manages to capture the Euclidean distance and the Kullback-Lieber information divergence, and many others between.

\section{Statistical Definitions}

This section describes the necessary functions and definitions to define the escort replicator equation. The following definitions are due to or adapted from contemporary work in statistical thermodynamics and information geometry\cite{Naudts04}.

Assume a given function $\phi$, called an \emph{escort}, that is strictly positive on $(0, 1)$. An escort induces a mapping of the set of categorical probability distributions (the simplex) into itself by applying the escort to each coordinate and normalizing. A common escort is $\phi(x) = x^{q}$, which yields constructions of the Tsallis and R\'{e}nyi type. This is a $q$-deformation as the limit $q \to 1$ recovers the distribution. All escorts in this paper will be non-decreasing.

Throughout, a function of a single variable applied to a vector is to be interpreted as applying to each coordinate, i.e. if $x = (x_1, \ldots, x_n)$ then $\log(x) = (\log(x_1), \ldots, \log(x_n))$. This abuse of notation will be clear from context and avoids excessive notation.

\begin{defn}[Partition Function, Escort Distribution]
For a distribution $x = (x_1, x_2, \ldots, x_n)$, define the \emph{partition function}
\[ Z_{\phi}(x) = \sum_{i=1}^{n}{ \phi(x_i)},\]
and the \emph{escort distribution}
\[ \escortdist(x) = \frac{1}{Z_{\phi}(x)} (\phi(x_1), \phi(x_2), \ldots, \phi(x_n)) = \frac{\phi(x)}{Z_{\phi}(x)}. \]
\end{defn}

Notation for a generalized mean is convenient. The escort expectation is the expectation taken with respect to the escort distribution in place of the original distribution.

\begin{defn}[Escort Expectation]
For a distribution $x = (x_1, x_2, \ldots, x_n)$ and a vector $f$ define the \emph{escort expectation}
\[ \phimean{x}{f} = \mathbb{E}_{\escortdist(x)}{\left[f\right]} = \frac{1}{Z_{\phi}(x)} \sum_{i=1}^{n}{\phi{(x_i)} f_i} = \frac{\phi(x) \cdot f}{Z_{\phi}(x)} = \escortdist(x) \cdot f. \]
\end{defn}

The escort function has various effects depending on its form. In the case of a constant escort, all distributions are mapped to the uniform distribution and the mean is just the average. If the escort function does not have the property that $\phi(0) = 0$, such as for $\phi(x) = e^x$, then the escort changes an event with probability $x_i=0$ to nonzero probability, affecting the computation of the mean if the escort distribution is used. For such an escort, The induced endomorphism maps the boundary of the simplex into the interior of the simplex.

Escort information divergences are defined by generalizing the natural logarithm using an escort function.

\begin{defn}[Escort Logarithm]
Define the escort logarithm
\[ \philog(x) = \int_{1}^{x}{\frac{1}{\phi{(v)}} \, dv} \]
\end{defn}

For example, for the function $\phi(x) = x^q$, the escort logarithm is
\[ \philog (x) = \frac{x^{1-q} - 1}{1-q}.\] The limit $q \to 1$ recovers the natural logarithm. A generalization of the logistic map of dynamical systems using this logarithm is given in \cite{Martinez08}.

The escort logarithm shares several properties with the natural logarithm: it is negative and increasing on $(0,1)$ and concave on $(0,1)$ if $\phi$ is strictly increasing. Define the function $\phiexp$ to be the inverse function of $\philog$.

\begin{defn}[Escort Divergences]
Define the escort divergence
% Correct: \[ D_{\phi}(y || x) = \sum_{i=1}^{n}{\int_{x_i}^{y_i}{ \philog{(u)} - \philog{(x_i)} \, du} }.\]
\[ D_{\phi}(x || y) = \sum_{i=1}^{n}{\int_{y_i}^{x_i}{ \philog{(u)} - \philog{(y_i)} \, du} }.\]
% \[ D_{\phi}(x || y) = \sum_{i=1}^{n}{\int_{y_i}^{x_i}{ \philog{(u)} - \philog{(y_i)} \, du} }.\]
\end{defn}

Since the logarithms are increasing on $(0,1)$, this divergence satisifies the usual properties of an information divergence on the simplex: $D(x||y) > 0$ if $x \neq y$ and $D(x||x) = 0$. The identity function $\phi(x) = x$ generates the usual logarithm and exponential with the Kullback-Liebler divergence. Setting $\phi(x) = x^q$ generates divergences similar to Tsallis and R\'{e}nyi divergences, and the $\alpha$-divergence of information geometry\cite{Dukkipati05-2, Amari93}.

\section{Geometry and Dynamics}

Define the escort metric \[ g^{\phi}_{ij}(x) = \frac{1}{\phi(x_i)} \delta_{ij}\] on the simplex. This is a Riemannian metric since the escort $\phi$ is strictly positive and so the metric is positive definite. The metric may be obtained as the Hessian of the escort divergence. The identity function $\phi(x) = x$ generates the Fisher information metric, also known as the Shahshahani metric in evolutionary game theory. Denote the simplex with the escort metric as the escort manifold.

% The Shahshahani metric pulls back to the Fisher information metric\cite{Amari95}
% \[ g_{ij}(x) = \mathbb{E}\left[ \frac{\partial \log p}{\partial x^i} \frac{\partial \log p}{\partial x^j}  \right]. \] The escort metric can be obtained from a Fisher-like formula\cite{Ohara07}:
% \[ g^{\phi}_{ij}(x) = \mathbb{E}\left[ \frac{\partial \log p}{\partial x^i} \frac{\partial \philog p}{\partial x^j}  \right] .\]
% Explicitly,
% \[ g^{\phi}_{ij}(x) = \sum_{k}{x_k \frac{1}{x_k} \delta_{ik} * \frac{1}{\phi(x_k)} \delta_{kj}} = \frac{1}{\phi(x_i)} \delta_{ij}.\]
% A symmetric form that yields the escort metric is
% \[ g^{\phi}_{ij}(p) = Z_{\phi}{(x)}\phimean{x}{\frac{\partial \philog p}{\partial x_i} \frac{\partial \philog p}{\partial x_j}},\]
% which also reduces to the Fisher information metric if $\phi(x) = x$.
% 
% The transformation $x \mapsto 2\sqrt{x}$ transforms the Shahshahani manifold into the positive portion of the radius 2 $n$-sphere with the Euclidean metric \cite{Akin79}. The analogous transformation to Euclidean space for the escort metric is given by the antiderivative $\ds{\int{\frac{1}{\sqrt{\phi(x)}} \, dx}}$ as a direct computation of the Jacobian shows. The special case of $\phi(x) = x$ gives Akin's result and a geodesic genomic distance $\ds{ d(p,q) = 2 \arccos{ \left( \sum_{i}{ \sqrt{p_i q_i} } \right)}}$. In principle, these transformations allow for the derivation of generalized measures of genomic distance.
% 
It is known that the replicator equation is the gradient flow of the Shahshahani metric, with the right hand side of the equation a gradient with respect to the Shahshahani metric, if the fitness landscape is a Euclidean metric\cite{Shahshahani79, Hofbauer98}. This is also the case for the escort metric. See \cite{Hofbauer98} for a similar computation for the Shahshahani metric.

\begin{prop}
Let $\hat{f}_{i} = \phi(x_i) \left( f_i(x) - \phimean{x}{f(x)}  \right)$; $\hat{f}$ is a gradient with respect to the  escort metric if $f$ is a Euclidean gradient.
\end{prop}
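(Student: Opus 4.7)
The plan is to interpret the statement in the standard way: if $V \colon \mathbb{R}^n \to \mathbb{R}$ is a smooth function with Euclidean gradient $f_i = \partial V/\partial x_i$, then $\hat f_\phi$ should coincide with the gradient of $V|_{\Delta^n}$ computed in the $\phi$-metric. The defining relation to verify is $g^\phi(\hat f_\phi, Y) = dV(Y) = f \cdot Y$ for every vector $Y$ tangent to the simplex, equivalently every $Y$ with $\sum_i Y_i = 0$.

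First I would check that $\hat f_\phi$ is itself tangent to the simplex, since the gradient of a function on a submanifold must live in the tangent space. Summing components,
\[ \sum_i \hat f_{\phi,i} = \sum_i \phi(x_i) f_i - \phimean{x}{f} \sum_i \phi(x_i) = Z_\phi(x)\phimean{x}{f} - Z_\phi(x)\phimean{x}{f} = 0, \]
so subtracting $\phimean{x}{f}$ in the definition is exactly the correction needed to land in the tangent space of the simplex.

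Next, because $g^\phi_{ij} = \delta_{ij}/\phi(x_i)$ is diagonal, the pairing collapses cleanly:
\[ g^\phi(\hat f_\phi, Y) = \sum_i \frac{1}{\phi(x_i)} \cdot \phi(x_i)\bigl(f_i - \phimean{x}{f}\bigr) Y_i = \sum_i \bigl(f_i - \phimean{x}{f}\bigr) Y_i. \]
Since $\sum_i Y_i = 0$, the $\phimean{x}{f}$ term drops out, leaving $\sum_i f_i Y_i = dV(Y)$, which is exactly the required identity.

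I do not anticipate a genuine obstacle. Conceptually, raising the index of $dV$ with the inverse metric $\phi(x_i)\delta^{ij}$ produces the naive ambient gradient $\phi(x_i)f_i$, and subtracting $\phi(x_i)\phimean{x}{f}$ is precisely the orthogonal projection (in $g^\phi$) onto the tangent space $\{Y : \sum Y_i = 0\}$; a direct check using the $\phi$-normal vector $(\phi(x_1), \ldots, \phi(x_n))$ to the constraint $\sum x_i = 1$ confirms this. The only thing to state carefully is that \emph{gradient} here means gradient of the restriction of the Euclidean potential $V$ to the simplex, which is what makes the mean-subtraction term meaningful and renders the argument so short.
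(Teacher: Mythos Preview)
Your proposal is correct and follows essentially the same argument as the paper: first verify $\sum_i \hat f_{\phi,i}=0$ so that $\hat f_\phi$ lies in the tangent space of the simplex, then compute $g^\phi(\hat f_\phi, Y)$, use the diagonal form of the metric to cancel the $\phi(x_i)$ factors, and invoke $\sum_i Y_i = 0$ to drop the $\phimean{x}{f}$ term and recover $\sum_i f_i Y_i = D_x V(Y)$. Your closing remark interpreting the mean-subtraction as the $g^\phi$-orthogonal projection onto the tangent space is a nice conceptual gloss, but the core computation is identical to the paper's.
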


\begin{proof}
For $\hat{f}(x)$ to be in the tangent space of the simplex requires that $\sum_{i}{ \hat{f}_{i}(x)} = 0$.
\begin{align*}
\sum_{i}{ \hat{f}_{i}(x) } &= \sum_{i}{\phi(x_{i}) \left( f_i(x) - \phimean{x}{f(x)} \right)}\\
&= \sum_{i}{\phi(x_i) f_i(x)} - \sum_{i}{ \phi(x_{i}) \left( \phimean{x}{f(x)} \right)}\\
&= \sum_{i}{\phi(x_i) f_i(x)} - \sum_{j}{ \phi(x_{j}) f_j(x)} = 0
\end{align*}

To see that $\hat{f}$ is a gradient with respect to the escort metric, recognize that the gradient is defined uniquely by the relation $<\text{grad} \, f, z>_{x} = D_x V(z)$, where $f$ is the Euclidean gradient of $V$. Verify that $\hat{f}$ has this property as follows:
\begin{align*}
<\hat{f}, z>_{x} &= \sum_{i}{ \frac{1}{\phi(x_i)} \ess{f}_{i}(x) z_i}\\
&= \sum_{i}{ \frac{1}{\phi(x_i)} \phi(x_i) \left( f_i(x) - \phimean{x}{f(x)} \right)  z_i}\\
&= \sum_{i}{ f_i(x) z_i} - \sum_{i}{z_i \left(\phimean{x}{f(x)} \right)}\\
&= \sum_{i}{ f_i(x) z_i} - \left(\sum_{i}{z_i}\right) \left(\phimean{x}{f(x)} \right)\\
&= \sum_{i}{ f_i(x) z_i} = \sum_{i}{ \frac{\partial V}{\partial x_i}(x) z_i} = D_x V(z)
\end{align*}
\end{proof}

Define the escort replicator equation as
\begin{equation}\label{escort_dynamic}
\dot{x}_i = \phi(x_i) \left( f_i(x) - \phimean{x}{f(x)}  \right)
\end{equation}
% Like the replicator equation, it is the gradient flow only when the fitness landscape is a Euclidean gradient. The escort dynamic can be arrived at by deforming the replicator equation: noting that $\ds{\frac{d}{dt}{\philog{x_i}} = \frac{\dot{x_i}}{\phi(x_i)}}$, rewrite the escort dynamic as 
% \[\frac{d}{dt}\left(\philog{x_i}\right) = f_i(x) - \phimean{x}{f(x)},\]
% which is a deformation of the replicator dynamic,
% \[\frac{d}{dt}\left(\log{x_i}\right) = f_i(x) - \mathbb{E}_{x}{\left[ f(x) \right]}.\]

The escort dynamic shares many properties with the replicator dynamic. For instance, a Nash equilibrium of the fitness landscape $f$ is a rest point of the escort dynamic. Indeed, if $\ess{x}$ is a Nash equilibrium, there is a constant $c$ such $f(\ess{x})_i = c$ for all $i \in \text{supp}(\ess{x})$, from which it follows that $\phimean{\ess{x}}{f(\ess{x})} = c$. This gives a rest point of the dynamic since $\dot{x}_i = \phi(\ess{x}_i) \left( f_i(\ess{x}) - \phimean{\ess{x}}{f(\ess{x})} \right) = \phi(\ess{x_i})(c-c) = 0$. The next section will show that several well-known results for the replicator equation naturally generalize to the escort dynamic.
%Another method to understand that Nash equilibria are stationary is that the escort dynamic is an orthogonal projection with respect to the escort metric. Since the orthogonal complement to the tangent space of any point on the interior of the simplex is the span of $(1,1, \ldots, 1)$, a Nash equilibrium is already orthogonal.
% Interpreting the dynamic as an adaptive dynamic on the simplex shows that  local ESS are asymptotically stable for the escort dynamic.\cite{Hofbauer98}

\section{Generalizations of Two Fundamental Results}

A generalized version of Kimura's Maximal Principle follows immediately from the fact that the escort dynamic is a gradient when the landscape is a Euclidean gradient. A simple calculation gives an analog of Fisher's Fundamental Theorem of Natural Selection (FFT). See \cite{Hofbauer98} for a similar calculation for the replicator dynamic.

\begin{theorem}[Escort FFT]
Suppose the landscape $f$ is the Euclidean gradient of a potential function $V$. Then
\[ \frac{d}{dt}V(x) = Z_{\phi}(x) \phivar{x}{f(x)}\]
\end{theorem}

\begin{proof}
\begin{align*}
\dot{V}(x) &= D_x V(\dot{x}) = <\ess{f}_{\phi}(x), \ess{f}_{\phi}(x)>\\
&= \sum_{i=1}^{n}{\frac{1}{\phi(x_i)} [\phi(x_i)(f_i(x) - \phimean{x}{f(x)})]^2}\\
&= Z_{\phi}(x) \phimean{x}{(f_i(x) - \phimean{x}{f(x)})^2} = Z_{\phi}(x) Var_{x}^{\phi}[f(x)]
\end{align*}
\end{proof}

The Kullback-Liebler divergence gives a local Lyapunov function for the replicator equation, a result which generalizes to the escort dynamic. See \cite{Sato05} for adaptive dynamics which also have a Lyapunov function given by information divergence. A point $\ess{x}$ is an evolutionarily stable state (ESS) if for all $x$ in some neighborhood of $\ess{x}$, $\ess{x} \cdot f(x) > x \cdot f(x)$. 

% This requires the concept of escort evolutionarily stable state, or escort ESS.
% \begin{defn}
% A distribution $\ess{x}$ is an escort ESS if $\phimean{\ess{x}}{f(x)} > \phimean{x}{f(x)}$ for all $x$ in a neighborhood of $\ess{x}$.
% \end{defn}
% 
% Note that this differs from the definition of ESS only by the use of the escort mean instead of the usual mean. Whether an escort ESS is an ESS depends on the form of $\phi$. For example, there are no escort ESS for constant escort functions because the inequality is never satisfied. 

\begin{theorem}\label{lyapunov}
A state $\ess{x}$ is an ESS if and only if the escort divergence $\tilde{D}_{\phi}(\ess{x} || x)$ is a local Lyapunov function for the escort dynamic.
\end{theorem}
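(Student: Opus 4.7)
The plan is to compute $\frac{d}{dt}\tilde{D}_{\phi}(\ess{x} \| x)$ along the escort dynamic, show that it collapses to $-(\ess{x} - x) \cdot f(x)$, and then read off both directions of the biconditional from the standard ESS condition $(\ess{x} - x) \cdot f(x) > 0$ on a punctured neighborhood of $\ess{x}$. This mirrors the classical replicator/KL argument, with the escort structure built in through a Bregman-divergence reformulation.

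The key structural observation is that the escort divergence is a Bregman divergence. Let $G$ be an antiderivative of $\philog$ and set $F(x) = \sum_i G(x_i)$; since $G''(u) = 1/\phi(u) > 0$, $F$ is strictly convex, and a short calculation (integration by parts in the definition of $D_\phi$) identifies
\[ \tilde{D}_{\phi}(\ess{x} \| x) = F(\ess{x}) - F(x) - \nabla F(x) \cdot (\ess{x} - x). \]
I read the tilde as selecting the argument order consistent with this Bregman convention, since the $D_\phi$ of the earlier Definition places its arguments in the opposite positions. Strict convexity of $F$ gives $\tilde{D}_{\phi}(\ess{x} \| x) \ge 0$ with equality only at $x = \ess{x}$, so the Lyapunov candidate is well-posed.

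Differentiating in $t$ with $\ess{x}$ fixed, the $F(\ess{x})$ term vanishes and the other two collapse by the chain rule to
\[ \frac{d}{dt}\tilde{D}_{\phi}(\ess{x} \| x) = -\nabla^2 F(x)\,\dot{x} \cdot (\ess{x} - x) = -\sum_i \frac{\dot{x}_i}{\phi(x_i)}(\ess{x}_i - x_i). \]
Substituting $\dot{x}_i = \phi(x_i)\bigl(f_i(x) - \phimean{x}{f(x)}\bigr)$ cancels the factor of $\phi(x_i)$, and the mean-field piece contributes $\phimean{x}{f(x)} \sum_i (\ess{x}_i - x_i) = 0$ since $\ess{x}$ and $x$ both lie on the simplex, leaving the clean identity
\[ \frac{d}{dt}\tilde{D}_{\phi}(\ess{x} \| x) = -(\ess{x} - x) \cdot f(x). \]

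From here both directions are immediate: if $\ess{x}$ is an ESS the right-hand side is strictly negative on a punctured neighborhood, so $\tilde{D}_{\phi}(\ess{x}\|\cdot)$ is a strict Lyapunov function; conversely, if $\tilde{D}_{\phi}(\ess{x}\|\cdot)$ is a Lyapunov function then the same identity forces the ESS inequality locally. The main obstacle is matching the argument order in $\tilde{D}_{\phi}$ to the Bregman form so that the Hessian-level derivative works out as above; once that identification is made the computation is three lines. A secondary subtlety is handling boundary ESSs, where $\philog$ may fail to extend to $x_i = 0$ for escorts with $\phi(0) = 0$; this is dealt with by restricting the argument to $\mathrm{supp}(\ess{x})$, exactly as in the classical KL case.
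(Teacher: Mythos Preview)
Your argument is correct and matches the paper's proof line for line: both compute $\dot V = -\sum_i \frac{\dot x_i}{\phi(x_i)}(\ess{x}_i - x_i)$, substitute the escort dynamic to cancel $\phi(x_i)$, use $\sum_i(\ess{x}_i - x_i)=0$ to drop the mean term, and reduce to $-(\ess{x}-x)\cdot f(x)$. Your Bregman-divergence reformulation is a helpful piece of scaffolding the paper omits---it cleanly justifies positivity and resolves the tilde/argument-order ambiguity (the paper's proof actually writes $V=D_\phi(\ess{x}\|x)$ while its subsequent remark calls it the ``dual divergence'')---but the underlying route is the same.
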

\begin{proof}
Let $V(x) = D_{\phi}(\ess{x} || x)$. $V$ is positive, and $V$ is zero if and only if $\ess{x} = x$. Taking the time derivative gives:
% \begin{align*}
% \dot{V} &= -\sum_{i}{ \escortdist(\ess{x}_i) \frac{1}{\phi(x_i)} \dot{x}_i }\\
% &= -\sum_{i}{ \escortdist(\ess{x}_i) \frac{1}{\phi(x_i)} \phi(x_i)(f_i(x) - \phimean{x}{f(x)}) }\\
% &= -\sum_{i}{ \escortdist(\ess{x}_i)(f_i(x) - \phimean{x}{f(x)}) }\\
% &= -(\phimean{\ess{x}}{f(x)} - \phimean{x}{f(x)}) < 0,
% \end{align*}
\begin{align*}
\dot{V} &= -\sum_{i}{ (\ess{x}_i - x_i) \frac{1}{\phi(x_i)} \dot{x}_i }\\
&= -\sum_{i}{ (\ess{x}_i - x_i) \frac{1}{\phi(x_i)} \phi(x_i)(f_i(x) - \phimean{x}{f(x)}) }\\
&= -\sum_{i}{ (\ess{x}_i - x_i)(f_i(x) - \phimean{x}{f(x)}) }\\
&= -(\ess{x} - x) \cdot f(x) < 0,
\end{align*}
where the last inequality is true if and only if the state $\ess{x}$ is an ESS. Hence the ESS is a local minimum and $V$ is a Lyapunov function for the dynamic.
\end{proof}

% \begin{remark}
% The escort dynamic is a special case of the construction of the adaptive dynamic. This immediately gives a local Lyapunov function from the geodesic approximation \cite{Hofbauer98}. A second is given in the case of an escort gradient, if the landscape is a Euclidean gradient. A third is given above in terms of the dual divergence in the preceding theorem, which is global and independent of the form of the fitness landscape.
% \end{remark}

\section{Examples}

\subsection{Replicator dynamic}
Let $\phi(x) = x$ be the identity function. The $\philog$ and escort distributions are then just the ordinary logarithm and distribution, and the escort replicator equation is the replicator dynamic. The induced metric is the Shahshahani metric, and the induced divergence is the Kullback-Liebler divergence. 
% For a detailed exposition of this case see \cite{Harper09_ig_egt}.

\subsection{Replicator dynamic with selection intensity}

Let $\phi(x) = \beta x$, where $\beta > 0$ is the inverse temperature. The induced escort mapping is the identity because the parameter $\beta$ cancels. The escort logarithm is $\frac{1}{\beta} \log{x}$ and the induced escort dynamic is
\[\dot{x}_i = \beta x_i(f_i(x) - \bar{f}(x)),\]
a form in which can be derived using Fermi selection and stochastic differential equations (with parameter $\beta / 2$) \cite{Pacheco06}. The parameter $\beta$ can be interpreted as the intensity of selection. It affects the velocity of selection but not the trajectories.% This is related to the fact that the Fisher information metric is the unique metric, up to a constant multiple, on the simplex that respects sufficient statistics\cite{Chentsov}.

In general, given any escort function $\psi$, a new escort $\phi(x) = \beta \psi(x)$ induces an escort dynamic with a leading intensity of selection factor. The trajectories are the same up to a change in velocity. Such dynamics can be arrived at by localizing other information divergences (see Discussion).

\subsection{$q$-deformed replicator dynamic}
Let $\phi(x) = x^q$. This generates the $q$-metric $x_{i}^{-\q} \delta_{ij}$ and the $q$-deformed replicator dynamic
\[ \dot{x}_i = x_{i}^{\q} \left( f_i(x) - \frac{ \sum_{j}{ x_{j}^{\q} f_j(x)} }{ \sum_{j}{x_{j}^{\q} }} \right) = x_{i}^{\q} \left( f_i(x) - \qex{x}{f(x)} \right). \]
The limiting case $q \to 1$ yields the replicator equation, as is usual for $q$-deformations. The $q$-deformed replicator dynamic is formally similar, but not identical, to the dynamic generated by the geometry of escort parameters, viewing $q$ as parameter, derived in \cite{Abe03}. For detail regarding the escort logarithms and Tsallis entropy see \cite{Furuichi09}. For a detailed exposition of the information geometry in this case see \cite{Ohara07}. The case $q=0$ is the orthogonal projection dynamic \cite{Joosten2008}.

The following example for $q=2$ gives an analog of the zero-sum property for a nonlinear fitness landscape and derives an integral of motion from the escort logarithm.

\subsubsection{Poincare Dynamic and a Constant of Motion} Consider the case $\phi(x) = x^2$ for $(q=2)$. This metric is called the Poincar\'{e} metric\cite{Horie08}. %, and an entropy similar to Simpson diversity.
Call the associated dynamic the Poincar\'{e} dynamic.

Let us consider the following explicit example, adapted from an example for the replication equation in\cite{Hofbauer98}. For the replicator equation with a fitness landscape given by a matrix $A$, so that $f(x) = Ax$, the game matrix $A$ is called zero sum if $a_{ij} = -a_{ji}$. This yields a mean fitness $\bar{f}(x) = x \cdot Ax = 0$. For instance, consider the rock-scissors-paper matrix
\begin{center}
$A = \begin{bmatrix}
0 & 1 & -1 \\
-1 & 0 & 1 \\
1 & -1 & 0 \\
\end{bmatrix}$.
\end{center}
This gives a zero-sum game with fitness landscape $f(x) = (x_2 - x_3, x_3 - x_1, x_1 - x_2)$ and internal rest point $\ess{x} = (1/3, 1/3, 1/3)$ (that is not an ESS). The dynamic has an integral of motion given by $x_1 x_2 x_3$.

For the Poincare dynamic, it is not the case that $\phimean{x}{f(x)} = 0$, but the quadratic fitness landscape $f(x) = A \phi(x) = (x_2^2 - x_3^2, x_3^2 - x_1^2, x_1^2 - x_2^2)$ does have zero escort mean. This property is an analog of zero-sum, having a similar effect on the Poincare dynamic which now takes the form $\dot{x}_i = x_i^2 f_i(x)$. The state $\ess{x} = (1/3, 1/3, 1/3)$ is still an interior equilibrium but not an ESS, so Theorem \ref{lyapunov} does not apply. In this case the cross entropy is constant along trajectories of the dynamic. Using the fact that $\philog(x) = 1 - \frac{1}{x}$, we have an integral of motion for the Poincar\'{e} dynamic given by
\begin{equation}\label{integral}
I = \sum_{i}{\ess{x_i} \philog{x_i}} = 1 - \frac{1}{3}\left(\frac{1}{x_1} + \frac{1}{x_2} + \frac{1}{x_3}\right), 
\end{equation}
which is evident because
\[
\frac{d}{dt}\left( \frac{1}{x_1} + \frac{1}{x_2} + \frac{1}{x_3}\right) = -\left( \frac{\dot{x_1}}{x_1^2} + \frac{\dot{x_2}}{x_2^2} + \frac{\dot{x_3}}{x_3^3}\right) = 0. \]
Finally, note that the integral of motion in equation (\ref{integral}) for the original (non-linear landscape) example produces the constant of motion $1 / 3 \log{(x_1 x_2 x_3)}$. This quantity is constant if and only if $x_1 x_2 x_3$ is constant, so the equations are equivalent. The Kullback-Liebler divergence would suffice in the linear case of $\phi(x) = x$ as there is no difference between the divergence arising from the escort, and the naive divergence obtained by subsitituting $\philog$ for $\log$ in the KL-divergence. In the general case these two quantities differ and the naive divergence does not satisfy the divergence axioms.

There is a particularly nice interpretation of the information divergence in this context. The information divergence between two distributions can be interpreted as the information gain in the directed movement from the initial distribution to the final distribution. For a landscape with an ESS, along interior trajectories the possible information gain is decreasing toward a minimum at the ESS. This corresponds with an interpretation of the information divergence in inference as \emph{potential information}.\cite{Lee10}

% The example easily generalizes to longer cycles and arbitrary escort functions, yielding a constant of motion $\sum_{i}{\ess{x}_i \philog{x_i}}$ in general.

% Another interesting case is $q = \frac{1}{2}$, $\phi(x) = x^{\frac{1}{2}}$ because the logarithm takes the form $2(x^{\frac{1}{2}} - 1)$.

\subsection{Orthogonal Projection Dynamic}
Let $\phi(x) = 1$, which gives the Euclidean metric. The resulting dynamic is the orthogonal projection dynamic,
\[ \dot{x}_i = f_i(x) - \frac{1}{n}\sum_{i=0}^{n}{f_i(x)},\]
where the expectation is the average. The Lyapunov function takes the form $\frac{1}{2}||x - \ess{x}||^2$, which captures the known result up to the factor of $1/2$.\cite{Sandholm08} This is the first non-forward invariant example of this section.

\subsection{Exponential Escort Dynamic}
Another example of a dynamic that is not forward-invariant is the dynamic generated by $\phi(x) = e^x$:
\[ \dot{x}_i = e^{x_i} \left( f_i(x) - \frac{ \sum_{j}{ e^{x_j} f_j(x)} }{ \sum_{j}{e^{x_j} }} \right). \]
For an explicit example, consider the fitness landscape $f(x) = e^{-x} = (e^{-x_1}, \ldots, e^{-x_n})$. After some simplification, the dynamic takes the form
\[ \dot{x}_i = 1 - \frac{n e^{x_i}}{\sum_{j}{e^{x_j}}}, \]
the right-hand side of which is nonzero for $x_i=0$, so this dynamic always resurrects extinct types. The barycenter $(\frac{1}{n}, \ldots, \frac{1}{n})$ is the unique rest point and is evolutionarily stable. A Lyapunov function is 
\[ \sum_{i}{e^{-x_i}\left(\frac{1}{n} + 1 - x_i\right)} - n e^{-1/n}.  \]
% Note that the escort geometry in this case does not diverge on the boundary.

% Generalizing this example to arbitrary escort functions, the barycenter is an interior rest point of the escort dynamic with a fitness landscape $f(x) = \frac{1}{\phi(x)}$ (assuming $\phi(x) \neq 0$ for any $x \in (0,1)$), which is 
% \[ \dot{x}_i = 1 - \frac{n \phi(x_i)}{\sum_{j}{\phi(x_j)}}. \]

\subsection{Remark: Forward-Invariance}
The escort replicator equation is forward-invariant on the simplex, in the case of a non-constant fitness landscape, if and only if $\phi(0) = 0$, which is the case for the $q$-deformed replicator dynamic if $q > 0$, but not the case for the orthogonal projection dynamic, though the geometry is trivial. The exponential escort dynamic yields an example that is not forward-invariant with a non-trivial geometry.

% The escort dynamic satisfies a gauge invariance similar to the replicator equation. The orthogonal complement to the tangent space of the simplex is the line generated by the vector $\mathbf{1} = (1,1,\ldots ,1)$. Any function $g(x) \mathbf{1}$ mapping into the complement can be added to the fitness landscape without affecting the trajectories of the escort dynamic because the quantity $f_i(x) - \phimean{x}{f(x)}$ is preserved:
% $f_i(x) + g(x) - \phimean{x}{f(x) + g (x) \mathbf{1}} = f_i(x) + g(x) - (\phimean{x}{f(x)} + g(x)) = f_i(x) - \phimean{x}{f(x)}$.

% The remarks in Section \ref{gauge_transformation} regarding the gauge invariance of the replicator equation apply to the escort dynamic as well. The gauge invariance is needed to give explicit solutions in generalized log-linear cases.

\section{Discussion}

\subsection{Dynamics from Other Common Information Divergences}

A natural question to ask is whether other information divergences give interesting or known evolutionary dynamics. The escort dynamic is rather general in that it captures the localized form of many other commonly used information divergences. Two common classes of divergences, F-divergences and Bregman divergences, give dynamics captured by the escort dynamic. There are many other information divergences in the literature \cite{Lin91, Topose00, Endres03}, which appear to also yield dynamics in the escort class.
\subsubsection{F-divergences}
Let $F$ be a convex function on $(0, \infty)$ and consider the $F$-divergence 
\[D_{F}(x||y) = \sum_{i}{x_i F\left(\frac{y_i}{x_i}\right)}.\]
The $F$-divergence, when localized, produces a metric of the form $g_{ij}(x) = F''(1)\frac{1}{x_i}\delta_{ij}$, so these divergences yield dynamics that are special cases of the previous example \cite{Amari93}.
% This example also informs the definition the escort divergence. The Kullback-Liebler divergence is often written as
% \[ D_{KL}(x || y) = \sum_{i}{ x_i \log \left( \frac{x_i}{y_i} \right)},\]
% which suggests the escort divergence take the form
% \[ D_{\phi}(x || y) = \sum_{i}{ x_i \philog \left( \frac{x_i}{y_i} \right)}.\]
% This definition would fall into the case of an $F$-divergence and so not produce interesting new dynamics, and follows from the fact that $\philog(x y) \neq \philog x + \philog y$ in general.
\subsubsection{Bregman and Burea-Rao divergences}
Bregman divergences are summations of terms of the form $\varphi(u,v) = f(u) - f(v) - f'(v)(u-v)$, which localize to $\ddot{\varphi}(v,v) = f''(v)$, corresponding to an escort given by the relation $\ds{f''(v) = \frac{1}{\phi(v)}}$. Similarly, Burbea-Rao divergences are in the escort class, up to a multiplicative constant\cite{Pardo03}. 

% \subsubsection{Other divergences} Straight-forward computations show that Jensen-Shannon divergences\cite{Lin91} and capicitory discrimination \cite{Topose00, Endres03} localize to (possibly a constant multiple of) the Shahshahani form (giving the replicator dynamic).

\subsection{Solutions of the Escort Replicator Equation}

In terms of the geometry, exponential families are normal coordinates on the manifold. The generalized exponential furnishes a formal solution to the escort dynamic. Let $v$ be a solution to $\dot{v}_i = f_i(x)$; then $x_i = \phiexp (v_i - G)$ is a solution to the escort dynamic where $\dot{G} = \phimean{x}{f(x)}$, which follows from the fact that $\dot{x}_1 + \cdots + \dot{x}_n = 0$. The following fact regarding the derivative of the escort exponential is needed for the proof of the solution (easily shown with implicit differentiation).
\begin{lemma}
\[ \frac{d}{dx}{\phiexp{x}} = \phi(\phiexp{x})\]
\end{lemma}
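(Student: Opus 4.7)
The plan is to apply the inverse function theorem directly, using the fact that $\phiexp$ is defined as the inverse of $\philog$, together with the fundamental theorem of calculus applied to the integral definition of $\philog$.

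First I would record that, since $\philog(x) = \int_{1}^{x} \frac{1}{\phi(v)}\,dv$ with $\phi$ strictly positive on $(0,1)$ (and assumed so on the relevant domain), the fundamental theorem of calculus yields
\[ \frac{d}{dx}\philog(x) = \frac{1}{\phi(x)}. \]
Because $\phi > 0$, this derivative is strictly positive, so $\philog$ is strictly monotone and the inverse $\phiexp$ exists and is differentiable on the image.

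Next, set $y = \phiexp(x)$, which is equivalent to $x = \philog(y)$. Differentiating the identity $x = \philog(\phiexp(x))$ with respect to $x$ and applying the chain rule gives
\[ 1 = \philog'(\phiexp(x)) \cdot \frac{d}{dx}\phiexp(x) = \frac{1}{\phi(\phiexp(x))} \cdot \frac{d}{dx}\phiexp(x), \]
and solving for the derivative yields $\frac{d}{dx}\phiexp(x) = \phi(\phiexp(x))$, as claimed.

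There is really no hard step; the only point that requires care is verifying that $\phi(\phiexp(x))$ is nonzero so the division in the inverse function theorem is legitimate, which is immediate from the standing hypothesis that $\phi$ is strictly positive. The rest is a mechanical application of implicit differentiation, exactly as the statement suggests.
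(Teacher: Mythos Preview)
Your proof is correct and is exactly the implicit differentiation argument the paper indicates: compute $\philog'(x)=1/\phi(x)$ from the fundamental theorem of calculus, then differentiate $x=\philog(\phiexp(x))$ and solve. There is nothing to add.
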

% \begin{proof}
% By definition, $\ds{ \frac{d}{dx} \philog(x) = \frac{1}{\phi(x)} }$. Let $y = \phiexp(x)$ and rearrange to $x = \philog(y)$. By implicit differentiation,
% $1 = \frac{1}{\phi(y)}\frac{dy}{dx}$, so $\ds{ \frac{d}{dx}{\phiexp(x)} = \phi(y) = \phi(\phiexp(x))}$.
% \end{proof}
Simple diffferentiation shows that the escort exponential formally solves the escort dynamic.
\begin{align*}
\dot{x_i} &= \frac{d}{dt}{\phiexp (v_i - G)} = \phi(\phiexp (v_i - G)) (\dot{v_i} - \dot{G}) \\
 &= \phi(x_i) (f_i(x) - \phimean{x}{f(x)}).
\end{align*}
If the equations for $v$ and $G$ can be solved, an explicit solution can be given. This is analogous to the solution method for the replicator equation given in \cite{Nihat05}.

solved by eigenvalue methods, and if the equation for $G$ can be solved, will yield an explicit closed form.

% \section{Discrete Escort Replicator Equation}
% 
% An escort also gives an escorted discrete escort replicator dynamic. The escorted versions take the form
% \[ x_{i}' = \frac{\phi(x_i) f_i(x)}{\phimean{x}{f(x)}}.\]
% % \[ P(H_i | E) = \frac{\phi(P(H_i)) P(E|H_i)}{P_{\phi}(E)}.\]
% % One potential use of this form of inference is to weight the prior distribution versus incoming observations disproportionately.
% 
% A particularly simple case is the discrete analog of the orthogonal projection dynamic
% \[ x_{i}' = \frac{f_i(x)}{\sum_{j}{f_j(x)}},\]
% which treats the fitness landscape as a non-normalized probability distribution estimating the population distribution.
% 
% As for the replicator equation, the escort dynamic can be obtained by a limiting procedure from the discrete analog:
% \begin{align*}
% {x_i}' - \phi(x_i) & = \frac{\phi(x_i) f_i(x)}{\phimean{x}{f(x)}} - \phi(x_i)\\
% &= \frac{\phi(x_i)}{\phimean{x}{f(x)}}(f_i(x) - \phimean{x}{f(x)}).
% \end{align*}
% After a change in velocity to remove the strictly positive scalar function $\phimean{x}{f(x)}$ from the denominator, the escort dynamic is obtained. Because of this correspondence, the escort inference equation is expected to behave well with respect to the escort divergences and escort exponential families, in analogy of the relationship between Bayesian inference and the Kullback-Liebler divergences\cite{Shalizi09}.

\subsection{Velocity Transformations and Uniqueness of $q$-deformed Dynamics}
In some cases, the escort replicator equation can be transformed into the replicator equation with an altered fitness landscape. If $\phi$ is invertible and differentiable on $[0,1]$, with nonzero derivative, the transformation $y = \phi(x)$ translates the escort replicator equation to

\[ \dot{y_i} = \phi' (x_i) \phi(x_i)\left(f_i(\phi(x)) - \phimean{x}{f(\phi(x))} \right) =
\phi'(x_i) y_i\left(f_i(y) - \mathbb{E}_{y}\left[f(y)\right] \right), \]
so that for the fitness landscape $g(y) = f(\phi(x))$, we have that
\[ \dot{y_i} = \phi'(\phi^{-1}(y_i)) y_i \left(g(y) - \bar{g}(y) \right). \]
If $\ds{\phi'(\phi^{-1}(y_i))}$ is strictly positive \emph{and does not depend on $i$} then the last equation can be transformed into the replicator equation
\[ \dot{y_i} = y_i \left(g(y) - \bar{g}(y) \right), \]
by a strictly monotonic change in time scale. An example is $\phi(x) = \beta x$, which was shown above to produce a dynamic equivalent to the replicator equation.

To determine if the dynamics generated by $\phi(x)$ and $\psi(y)$ are distinct with respect to change of velocity, consider the equations
\[ \dot{x}_i = \phi(x_i) \left( f_i(x) - \phimean{x}{f(x)}  \right) \qquad \text{and}\]
\[\dot{y}_i = \phi(y_i) \left( f_i(y) - \psimean{y}{f(y)}  \right). \]
Assuming that $\psi$ is differentiable with differentiable invertible and that $\phi$ is differentiable, the transformation $\phi(x_i) = \psi(y_i)$ yields the equation
\begin{align*}
\dot{y}_i &= (\psi^{-1})'(\phi(x_i))\phi'(x_i)\dot{x}_i \\
\dot{y}_i &= (\psi^{-1})'(\phi(x_i))\phi'(x_i)\phi(x_i) \left( f_i(x) - \phimean{x}{f(x)}\right)  \\
\dot{y}_i &= (\psi^{-1})'(\phi(x_i))\phi'(x_i)\psi(y_i) \left( f_i(\phi^{-1}(\psi(y))) - \psimean{y}{f(\phi^{-1}(\psi(y)))}\right).
\end{align*}
If $(\psi^{-1})'(\phi(x_i))\phi'(x_i)$ does not depend on $i$, the $\phi$ dynamic is a $\psi$ dynamic (for an altered landscape) after a change in velocity. In the case of $\phi(x) = x^p$ and $\psi(y) = y^q$, this quantity becomes
\[ \frac{p}{q} x_i^{\frac{p-q}{q}},\]
which is independent of $i$ if and only if $p=q$.

% This suggests that the escort dynamic is distinct with respect to velocity transformations for escorts $y = \phi(x)$ and $z = \psi(x)$ unless
% \[ \frac{\phi'(\phi^{-1}(y_i))}{\psi'(\psi^{-1}(y_i))}\]
% is strictly positive and does not depend on $i$. For $\phi(x) = x^p$ and $\psi(x) = x^q$ for $p,q \ge 1$, this ratio is 
% \[\frac{p}{q} {y_i}^{\frac{p - q}{pq}},\]
% which is independent of $i$ if and only if $p = q$.

\subsection{Vector-valued Escorts}

One can use vector-valued escort functions to define more general dynamics and all the preceeding results hold with little (if any) modification. E.g. the metric is now of the form $\ds{g_{ij}(x) = \frac{\delta_{ij}}{\psi_i(x)}}$, which is positive definite, and the induced dynamic is
\[ \dot{x}_i = \psi_i(x) \left( f_i(x) - \psimean{x}{f(x)}\right),\]
which differs because of the dependence on $i$ and the entire distribution in the leading term $\psi(x)$. The notational convention that $\psi(x)$ means $\psi$ is applied to each coordinate can be dropped for a vector-valued $\psi$ to obtain the definitions for escort mean and related quantities. If $\psi$ is identical in each coordinate, the leading factor can be eliminated with a change of velocity, since the dependence on $i$ is removed. (This is subtly different than for the escort dynamic, where $\phi(x_i)$ has dependence on the distribution argument but not the functional argument.) For a concrete example, the escort $\psi(x) = (\beta_1 x_1, \ldots, \beta_n x_n)$ yields a replicator-like dynamic with differential selection pressures on the individual types with Lyapunov function
\[D(x||y) = \sum_{i}{ \frac{1}{\beta_i} (y_i - x_i)} + \sum_{i}{\frac{x_i}{\beta_i} \log \left( \frac{x_i}{y_i} \right) }.\]

Examples are easy to generate with various choices of $\psi$, so as a Boltzmann-like dynamic using $\psi(x) = (e^{\beta f_1(x)}, \ldots, e^{\beta f_n(x)})$. With general $\psi$, care must be taken with the induced logarithms, as they differ for each coordinate and the defining integrals are now possibly vector-valued, and with the form of the information divergences. Nevertheless, many of the computations above hold with only minor changes (e.g. introduction of indicies for the escort), such as the generalization of Fisher's fundamental theorem.

\section{Conclusion}

Incorporating generalized information entropies from statistical thermodynamics yields interesting new dynamics and defines a class of dynamics that includes the replicator equation and the orthogonal projection dynamic. Some of the dynamics have interesting combinations of features, such as non-forward-invariance, while retaining nice informatic properties. By using constructions from information geometry, Lyapunov functions can be given for the class of dynamics arising from escorts that yield convex logarithms. Analogs of Fisher's fundamental theorem and other results follow naturally, yielding new evolutionary models.

\bibliography{ref}
\bibliographystyle{plain}

\end{document}